\begin{document}
 \bibliographystyle{plain}

 \newtheorem{theorem}{Theorem}
 \newtheorem{lemma}{Lemma}
 \newtheorem{corollary}{Corollary}
 \newtheorem{problem}{Problem}
 \newtheorem{conjecture}{Conjecture}
 \newtheorem{definition}{Definition}
 \newcommand{\mc}{\mathcal}
 \newcommand{\rar}{\rightarrow}
 \newcommand{\Rar}{\Rightarrow}
 \newcommand{\lar}{\leftarrow}
 \newcommand{\lrar}{\leftrightarrow}
 \newcommand{\Lrar}{\Leftrightarrow}
 \newcommand{\zpz}{\mathbb{Z}/p\mathbb{Z}}
 \newcommand{\mbb}{\mathbb}
 \newcommand{\A}{\mc{A}}
 \newcommand{\B}{\mc{B}}
 \newcommand{\cc}{\mc{C}}
 \newcommand{\D}{\mc{D}}
 \newcommand{\E}{\mc{E}}
 \newcommand{\F}{\mc{F}}
 \newcommand{\G}{\mc{G}}
  \newcommand{\ZG}{\Z (G)}
 \newcommand{\FN}{\F_n}
 \newcommand{\I}{\mc{I}}
 \newcommand{\J}{\mc{J}}
 \newcommand{\M}{\mc{M}}
 \newcommand{\nn}{\mc{N}}
 \newcommand{\qq}{\mc{Q}}
 \newcommand{\U}{\mc{U}}
 \newcommand{\X}{\mc{X}}
 \newcommand{\Y}{\mc{Y}}
 \newcommand{\itQ}{\mc{Q}}
 \newcommand{\C}{\mathbb{C}}
 \newcommand{\R}{\mathbb{R}}
 \newcommand{\N}{\mathbb{N}}
 \newcommand{\Q}{\mathbb{Q}}
 \newcommand{\Z}{\mathbb{Z}}
 \newcommand{\ff}{\mathfrak F}
 \newcommand{\fb}{f_{\beta}}
 \newcommand{\fg}{f_{\gamma}}
 \newcommand{\gb}{g_{\beta}}
 \newcommand{\vphi}{\varphi}
 \newcommand{\whXq}{\widehat{X}_q(0)}
 \newcommand{\Xnn}{g_{n,N}}
 \newcommand{\lf}{\left\lfloor}
 \newcommand{\rf}{\right\rfloor}
 \newcommand{\lQx}{L_Q(x)}
 \newcommand{\lQQ}{\frac{\lQx}{Q}}
 \newcommand{\rQx}{R_Q(x)}
 \newcommand{\rQQ}{\frac{\rQx}{Q}}
 \newcommand{\elQ}{\ell_Q(\alpha )}
 \newcommand{\oa}{\overline{a}}
 \newcommand{\oI}{\overline{I}}
 \newcommand{\dx}{\text{\rm d}x}
 \newcommand{\dy}{\text{\rm d}y}
\newcommand{\cal}[1]{\mathcal{#1}}
\newcommand{\cH}{{\cal H}}
\newcommand{\diam}{\operatorname{diam}}

\parskip=0.5ex

\title[Quantitative ergodic theorems for weakly integrable functions]{Quantitative ergodic theorems\\for weakly integrable functions}
\author{Alan~Haynes}
\subjclass[2010]{37A30, 37A45, 11K99}
\thanks{Research supported by EPSRC grant EP/J00149X/1.}
\address{School of Mathematics, University of Bristol, Bristol UK }
\email{alan.haynes@bristol.ac.uk}

\begin{abstract}
Under suitable hypotheses we establish a quantitative pointwise ergodic theorem which applies to trimmed Birkhoff sums of weakly integrable functions.
\end{abstract}

\allowdisplaybreaks

\maketitle

\section{Introduction}
Suppose that $T$ is an ergodic measure preserving transformation of a probability space $(X,\mu)$ and that $f$ is a non-negative measurable function on $X$. If $f\in \mathrm{L}^1(X,\mu)$ then the Birkhoff Ergodic Theorem \cite[Theorem 1.14]{Walters1982} tells us that
\begin{equation*}
\lim_{N\rar\infty}\frac{1}{N}\sum_{n=0}^{N-1}f(T^nx)=\int_X f~d\mu ~\text{ for a.e. }x\in X.
\end{equation*}
Several mathematicians have contemplated, in different contexts, the possibility of extending this theorem to non-integrable functions. In this more general setup the problem is to determine if there is a sequence $\{F_N\}\subseteq\R$ such that
\begin{equation}\label{eqnergavg1}
\lim_{N\rar\infty}\frac{1}{F_N}\sum_{n=0}^{N-1}f(T^nx)=1~\text{ for $\mu$-a.e. }~x\in X,
\end{equation}
Note that since $T$ is ergodic, if the limit on the left of (\ref{eqnergavg1}) exists then it must be constant a.e. (\cite[Theorem 1.6]{Walters1982}). Aaronson showed in \cite[Theorem 1]{Aaronson1977} that if $T$ is any ergodic transformation of a probability space and $f$ is a non-negative measurable function for which (\ref{eqnergavg1}) is satisfied for some choice of $\{F_N\}$, then $f$ must be integrable.

On the other hand it was shown by Diamond and Vaaler \cite{DiamondVaaler1986} that for the special case of simple continued fractions (i.e. the Gauss map on $\R/\Z$), the ergodic theorem can be extended to weakly integrable functions by removing at most one term from each of the Birkhoff sums. This result was recently generalized by Aaronson, Nakada, and Natsui \cite{AaronsonNakada2003, NakadaNatsui2002, NakadaNatsui2003}.

The goal of this paper is to present a quantitative version of these results which applies to a large class of well known dynamical systems. Our results do require estimates on the rate of mixing of the transformations involved, but such hypotheses are to be expected, as for many dynamical systems there is no hope of obtaining the types of theorems that we are presenting (cf. \cite{Avigad2009, Avigadetal2010, Kachurovskii1996}). To strengthen this point we demonstrate in Theorem \ref{thmmixingexample} that even the hypothesis of strong mixing alone is not enough to guarantee the results in our first two theorems.

Let $\A=\{A_i\}_{i\in\N}$ be a measurable partition of a probability space $(X,\mu)$ and let $T$ be a measure preserving transformation of the space. Further suppose that there is a function $g:\N\rar [0,\infty)$ with $g(N)=o(N)$ and
\begin{equation}\label{eqnuniformhyp1}
\sum_{n=0}^N\left(\frac{\mu(A_i\cap T^{-n}A_j)}{\mu (A_i)\mu (A_j)}-1\right)\le g(N)\quad\text{for all}\quad i,j,N\in\N.
\end{equation}
As a special case of one of our main results, we obtain the following.
\begin{theorem}\label{thmweakint(cor)}
Suppose $(X,\mu), T, \A,$ and $g$ are as above and that $f:X\rar [0,\infty )$ is measurable with respect to the $\sigma-$algebra generated by $\A$. Given $\epsilon >0$ let
\[F(N)=\int_{\{f\le N\log^{1/2+\epsilon}N\}}f~d\mu\quad\text{ and }\quad G(N)=\sum_{n=1}^Ng(n).\]
If $g(N)=o(N^{1/2})$ and $F(N+1)-F(N)\ll 1/N$ then there is a function $\delta_\epsilon:X\times\N\rar \{0,1\}$ such that for a.e. $x\in X$,
\begin{align*}
\sum_{n=0}^{N-1}f(T^nx)=&N F(N)+\delta_\epsilon(x,N)\cdot\max_{0\le n<N}f(T^nx)+O_\epsilon\left((N+G(N))^{2/3}\log^{5/3+\epsilon}N\right).
\end{align*}
\end{theorem}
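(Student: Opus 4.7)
I would decompose the Birkhoff sum by truncating $f$ at $L_N:=N\log^{1/2+\epsilon}N$, use a Borel--Cantelli argument to isolate the single oversized term as the maximum, and treat the truncated sum by a quantitative mean ergodic theorem leveraging the $\A$-measurability of $f$ and hypothesis~(\ref{eqnuniformhyp1}).

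The hypothesis $F(N+1)-F(N)\ll 1/N$ telescopes to the weak-type tail bound $\mu(\{f>t\})\ll 1/t$, whence the expected number of $n<N$ with $f(T^n x)>L_N$ is $\ll 1/\log^{1/2+\epsilon}N$. Summing the covariances for the indicator $\mathbf{1}_{\{f>L_{N_k}\}}$ via~(\ref{eqnuniformhyp1}) and a second-moment bound, the probability of having two indices $n,m<N_{k+1}$ with $f(T^nx)>L_{N_k}$ and $f(T^mx)>L_{N_k}$ is $\ll 1/\log^{1+2\epsilon}N_k$, which is summable in $k$ along $N_k=2^k$. Borel--Cantelli together with the monotonicity of $N\mapsto L_N$ then implies, for $\mu$-a.e.\ $x$ and all sufficiently large $N$, at most one $n<N$ satisfies $f(T^n x)>L_N$; when such $n$ exists it is automatically the argmax, and we define $\delta_\epsilon(x,N)$ to be the indicator of this event. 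This reduces the theorem to the quantitative statement
\[
\sum_{n=0}^{N-1}f_{L_N}(T^n x)=NF(N)+O\!\left((N+G(N))^{2/3}\log^{5/3+\epsilon}N\right),\quad f_{L_N}:=f\mathbf{1}_{\{f\le L_N\}}.
\]

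For this core estimate, I would expand $f_{L_N}=\sum_i c_i\mathbf{1}_{A_i\cap\{f\le L_N\}}$ using the $\A$-measurability and apply hypothesis~(\ref{eqnuniformhyp1}) via summation by parts to get the variance bound
\[
\mathrm{Var}\!\left(\sum_{n=a}^{b-1}f_{L_N}\circ T^n\right)\ll (b-a)L_N+G(b-a)F(N)^2,
\]
the factor $L_N$ coming from the layer-cake estimate $\int f_{L_N}^2\,d\mu\ll L_N$. A direct Chebyshev/Gal--Koksma use of this bound yields an error of order $N\log^{O(1)}N$, which is far too weak. To obtain the $2/3$ exponent I would introduce a finer truncation $L^*:=(N+G(N))^{1/3}\log^{O(1)}N$, apply a higher-moment (Rosenthal/Bernstein-type) concentration inequality for partial-mixing sums to $f_{L^*}$ in order to sharpen Chebyshev, and separately control the residual sum over indices with $L^*<f(T^nx)\le L_N$ by a variance/Gal--Koksma estimate for the indicator $\mathbf{1}_{\{f>L^*\}}$ multiplied by $L_N$. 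The choice of $L^*$ balances the two error contributions of orders $\sqrt{NL^*}\log^{O(1)}N$ and $L_N\sqrt{N/L^*}\log^{O(1)}N$, producing the claimed exponents.

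The main obstacle is obtaining a sharp higher-moment concentration inequality for the Birkhoff sums of $f_{L^*}$ from the covariance hypothesis~(\ref{eqnuniformhyp1}) alone (no $\alpha$- or $\phi$-mixing being assumed), together with the careful bookkeeping needed to push both the polynomial exponent $2/3$ and the logarithmic exponent $5/3+\epsilon$ through the two-scale truncation. The hypothesis $g(N)=o(N^{1/2})$ enters to ensure $G(N)=o(N^{3/2})$, so that $(N+G(N))^{2/3}$ remains sub-linear and the theorem has non-trivial content.
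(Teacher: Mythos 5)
Your reduction step is fine and is essentially the paper's Lemma \ref{lemonelargevalue}: the telescoped weak-type bound $\mu\{f>t\}\ll 1/t$, the second-moment/pair-correlation estimate via (\ref{eqnuniformhyp1}), and Borel--Cantelli along $N_k=2^k$ give at most one exceedance of $L_N$ a.e.\ eventually, exactly as in the paper. The genuine gap is in your core estimate for the truncated sum, and it is not a bookkeeping issue. First, the step you yourself flag as the main obstacle --- a Rosenthal/Bernstein-type higher-moment inequality for Birkhoff sums derived from the covariance-type hypothesis (\ref{eqnuniformhyp1}) alone --- is not available and is not supplied. Second, your two-scale balancing does not balance: with $L^*\asymp (N+G(N))^{1/3}$ the two contributions you name are $\sqrt{NL^*}\asymp N^{2/3}$ and $L_N\sqrt{N/L^*}\asymp N^{4/3}$ up to logarithms (they would only balance at $L^*\asymp L_N$), so the residual range $L^*<f\le L_N$ is not controlled at the target scale. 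Indeed, under your own variance bound $\mathrm{Var}\bigl(\sum_{n<N}f_{L_N}\circ T^n\bigr)\ll NL_N+G(N)F(N)^2$, the standard deviation is of order $N\log^{1/4+\epsilon/2}N$, and the fluctuations come from the few sub-threshold values of size comparable to $N$, which trimming a single maximum does not remove; no moment refinement closes that gap along your route.

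The paper's proof of Theorem \ref{thmweakint(cor)} (via Theorems \ref{thmweakint} and \ref{thmseqoffns}) gets the $2/3$ exponent by a different mechanism, and the difference is located precisely at the diagonal term where your variance bound has the factor $N$ in front of $\int f_{L_N}^2\,d\mu$. The paper writes the truncated sum as the double sum $\sum_{m<N}\sum_{n\le N}f_n(T^mx)$ with $f_n=f\mathbf{1}_{\{\tau(n-1)<f\le\tau(n)\}}$ and bounds its variance by $F_3(N)=F_1(N)^2(N+G(N))+F_2(N)$, in which $F_2(N)=\int f^2\mathbf{1}_{\{f\le L_N\}}\,d\mu\ll L_N$ appears only once, not multiplied by $N$; this is achieved by applying (\ref{eqnuniformhyp1}) with $i=j$ \emph{including the $n=0$ term}, so that the zero-shift self-correlations $\mu(A_i\cap A_i)=\mu(A_i)$ are absorbed into the $g$-bound. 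With that (much smaller) variance, plain Chebyshev along the sparse sequence defined by $F_3(N_j)>j^3\log^{1+\epsilon}(j+1)$, Borel--Cantelli, monotonicity of the nonnegative partial sums, and the regularity hypothesis (\ref{eqnthm2.1}) give the error $F_3(N)^{2/3}\log^{1/3+\epsilon}F_3(N)$, which specializes to $(N+G(N))^{2/3}\log^{5/3+\epsilon}N$; no higher moments and no second truncation level occur. Your bound $\ll (b-a)L_N+\cdots$ corresponds to reading (\ref{eqnuniformhyp1}) as constraining only nonzero shifts (or only $i\ne j$), and under that weaker reading the $2/3$ exponent is genuinely out of reach of your argument --- so the obstacle you hit is exactly the point where the paper's literal, very strong uniform hypothesis does the work, and your proposal as written does not prove the stated theorem.
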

For a large class of Markov maps this theorem gives an error term of $O(N^{2/3}\log^{5/3+\epsilon}N)$. As a prototypical application we give the following quantitative version of \cite[Corollary 1]{DiamondVaaler1986}.
\begin{corollary}\label{corcontfrac}
For any $\epsilon>0$ there is a function $\delta_\epsilon:\R\times\N\rar \{0,1\}$ such that for a.e. $x\in\R$ (with respect to Lebesgue measure),
\begin{align*}
\sum_{n=1}^Na_n(x)=&\frac{N}{\log 2}\sum_{n\le N\log^{1/2+\epsilon}N}n\log\left(1+\frac{1}{n(n+2)}\right)\\
&+\delta_\epsilon (x,N)\cdot\max_{1\le n\le N}a_n(x)+O_\epsilon(N^{2/3}\log^{5/3+\epsilon}N),
\end{align*}
where $x=[a_0(x);a_1(x),a_2(x),\ldots]$ denotes the simple continued fraction expansion of $x$.
\end{corollary}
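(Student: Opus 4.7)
\textbf{Proof plan for Corollary \ref{corcontfrac}.}
My plan is to recast the sum of partial quotients as a Birkhoff sum under the Gauss map $T(y)=\{1/y\}$ on $[0,1)$ and then apply Theorem \ref{thmweakint(cor)}. Setting $y=\{x\}$ and $f(y)=\lfloor 1/y\rfloor$, we have $a_n(x)=f(T^{n-1}y)$ for $n\ge 1$, hence $\sum_{n=1}^N a_n(x)=\sum_{n=0}^{N-1}f(T^n y)$. Take $\A=\{A_i\}_{i\in\N}$ with $A_i=(1/(i+1),1/i]$; since $f\equiv i$ on $A_i$, $f$ is $\sigma(\A)$-measurable. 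The invariant Gauss density $h(y)=1/((1+y)\log 2)$ is bounded above and below on $[0,1]$, so any Lebesgue-a.e.\ statement for $x\in\R$ is equivalent, after passing to fractional parts, to a $\mu$-a.e.\ statement for $y\in[0,1)$.

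With the reduction in place, three things must be checked. First, the Gauss map satisfies (\ref{eqnuniformhyp1}) with $g(N)=O(1)$: this is the multiplicative form of the classical Gauss--Kuzmin--L\'evy theorem (due to Kuzmin, sharpened by Wirsing, and equivalent to Philipp's exponential $\psi$-mixing bound), which yields constants $C>0$ and $\rho\in(0,1)$ with
\begin{equation*}
\left|\frac{\mu(A_i\cap T^{-n}A_j)}{\mu(A_i)\mu(A_j)}-1\right|\le C\rho^n
\end{equation*}
uniformly in $i,j,n$. Second, a direct calculation using $\mu(A_i)=(\log 2)^{-1}\log(1+1/(i(i+2)))$ and $f\equiv i$ on $A_i$ gives
\begin{equation*}
F(N)=\frac{1}{\log 2}\sum_{i\le N\log^{1/2+\epsilon}N} i\log\!\left(1+\frac{1}{i(i+2)}\right)+O(1/N),
\end{equation*}
the $O(1/N)$ absorbing the rounding of $N\log^{1/2+\epsilon}N$ to an integer. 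Third, $F(N+1)-F(N)$ is a sum of $O(\log^{1/2+\epsilon}N)$ terms of the form $i\mu(A_i)\asymp 1/i$ with $i\asymp N\log^{1/2+\epsilon}N$, so $F(N+1)-F(N)\ll 1/N$ as required.

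With the hypotheses verified, Theorem \ref{thmweakint(cor)} yields the asymptotic for $\sum_{n=0}^{N-1}f(T^n y)$ with leading term $N\cdot F(N)$, max term equal to $\max_{1\le n\le N}a_n(x)$, and error $O_\epsilon((N+G(N))^{2/3}\log^{5/3+\epsilon}N)=O_\epsilon(N^{2/3}\log^{5/3+\epsilon}N)$ since $G(N)=O(N)$; the $O(1/N)$ discrepancy in $F(N)$ contributes $O(1)$ after multiplication by $N$, comfortably smaller than the main error. The main obstacle is the first of the three verifications: obtaining the \emph{multiplicative} (ratio) mixing bound rather than a merely additive one, which requires invoking the spectral gap of the Gauss--Kuzmin transfer operator on a function space in which the indicators $\mathbf{1}_{A_i}$ have norm bounded independently of $i$ (e.g.\ BV or Lip). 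The remaining calculations for $F(N)$ and its differences are elementary.
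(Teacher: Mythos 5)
Your proposal is correct and follows essentially the same route as the paper: the paper likewise applies Theorem \ref{thmweakint(cor)} to the Gauss map with $f(x)=a_1(x)$ and the Gauss measure $\mu_g$, citing the well-known fact that the uniform mixing hypothesis (\ref{eqnuniformhyp1}) holds with $g(N)\ll 1$ (continued-fraction $\psi$-mixing), from which the result is immediate. Your explicit computation of $\mu_g(A_i)$, the verification that $F(N+1)-F(N)\ll 1/N$, and the transfer from $\mu_g$-a.e.\ to Lebesgue-a.e.\ are just the details the paper leaves implicit.
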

One interesting feature of these results is that the main terms vary with $\epsilon$ in a subtle way. For example in the continued fractions result the first main term starts off as
\[\frac{N\log N}{\log 2}+\frac{(1/2+\epsilon)N\log\log N}{\log 2},\]
but the error term is of much smaller order. The explanation is that this dependence on $\epsilon$ is balanced by the choice of the cutoff function $\delta_\epsilon$.

To state our main results in full generality, for any increasing function $\phi :\R^+\rar\R^+$ we define $\mathrm{L}^{\phi, w}=\mathrm{L}^{\phi,w}(X,\mu)$ to be the collection of measurable functions $f:X\rar\R$ for which
\begin{equation*}
K_\phi(f)=\sup_{\lambda >0}\phi(\lambda)\cdot \mu\{x\in X: |f(x)|>\lambda\}<\infty.
\end{equation*}
If $\phi (\lambda)=\lambda$ then $\mathrm{L}^{\phi,w}$ is the usual space of weakly integrable functions.
\begin{theorem}\label{thmweakint}
Suppose $(X,\mu), T, \A,$ and $g$ satisfy (\ref{eqnuniformhyp1}) and suppose that $f\in \mathrm{L}^{\phi, w}$ is non-negative and measurable with respect to $\A$. Given $\epsilon >0$ set $\tau_\phi (N)=\phi^{-1}(N\log^{1/2+\epsilon} N)$, and let
\begin{equation*}
F_1(N)=\int_{\{f\le\tau_\phi(N)\}}f~d\mu,\quad F_2(N)=\int_{\{f\le\tau_\phi(N)\}}f^2~d\mu,
\end{equation*}
\[G(N)=\sum_{n=1}^Ng(n),\quad\text{ and }\quad F_3(N)=F_1(N)^2(N+G(N))+F_2(N).\]
If
\begin{align*}
&(N+1)F_1(N+1)-NF_1(N)~\ll~ F_3(N+1)-F_3(N)~\ll~ F_3(N)^{2/3}
\end{align*}
then there is a function $\delta_\epsilon:X\times\N\rar \{0,1\}$ such that for a.e. $x\in X$
\begin{align*}
\sum_{n=0}^{N-1}f(T^nx)=&N F_1(N)+\delta_\epsilon(x,N)\cdot\max_{0\le n<N}f(T^nx)+O_\epsilon\left(F_3(N)^{2/3}\log^{1/3+\epsilon} F_3(N)\right).
\end{align*}
\end{theorem}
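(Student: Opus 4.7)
I begin by truncating: set $\tau_N = \tau_\phi(N) = \phi^{-1}(N\log^{1/2+\epsilon}N)$ and write $f = f_N + \widetilde{f}_N$ with $f_N = f \cdot \mathbf{1}_{\{f \le \tau_N\}}$. The Birkhoff sum splits as $S_N(f) = S_N(f_N) + S_N(\widetilde{f}_N)$, and the goal is to show, almost surely and for all large $N$, that $S_N(\widetilde{f}_N) = \delta_\epsilon(x, N) \max_{n < N} f(T^n x)$ with $\delta_\epsilon \in \{0, 1\}$ (the tail step), and that $|S_N(f_N) - N F_1(N)| = O(F_3(N)^{2/3} \log^{1/3+\epsilon} F_3(N))$ (the bulk step).

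For the tail, the weak-$\mathrm{L}^\phi$ estimate gives $\mu\{f > \tau_N\} \le K_\phi(f)/(N\log^{1/2+\epsilon}N)$, and since $f$ is $\A$-measurable, $E_N := \{f > \tau_N\}$ is a union of atoms of $\A$. Applying (\ref{eqnuniformhyp1}) atom by atom, for $Y_N(x) = \#\{n < N : T^n x \in E_N\}$ one obtains
\[\mathbb{E}[Y_N(Y_N - 1)] \ll N \mu(E_N)^2 (N + g(N)) \ll \log^{-(1+2\epsilon)} N \cdot (1 + g(N)/N),\]
which is summable along the dyadic subsequence $N_k = 2^k$. Borel--Cantelli, combined with the monotonicity $E_{N+1} \subseteq E_N$ (from $\tau_N$ nondecreasing), yields $Y_N \le 1$ eventually, almost everywhere, and the asserted form of $S_N(\widetilde{f}_N)$ follows.

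For the bulk, $T$-invariance gives $\mathbb{E}[S_N(f_N)] = N F_1(N)$. Writing $f_N = \sum_i c_i^{(N)} \mathbf{1}_{A_i}$ with $0 \le c_i^{(N)} \le \tau_N$, the variance of $S_N(f_N) - N F_1(N)$ decomposes into a diagonal contribution $N(F_2(N) - F_1(N)^2)$ and an off-diagonal covariance sum $2\sum_{k=1}^{N-1}(N-k) a_k^{(N)}$, where $a_k^{(N)} = \sum_{i,j} c_i^{(N)} c_j^{(N)} (\mu(A_i \cap T^{-k} A_j) - \mu(A_i) \mu(A_j))$; Abel summation of the partial sums $\sum_{k=1}^{m} a_k^{(N)} \le F_1(N)^2 g(m)$ against (\ref{eqnuniformhyp1}) bounds the off-diagonal by $F_1(N)^2 G(N)$. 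The delicate point, and the main obstacle, is to refine this moment estimate so that the effective concentration scale is $F_3(N)$ rather than the naive $NF_2(N) + F_1(N)^2 G(N)$; the idea is to isolate the (rarely visited) atoms on which $f$ is close to $\tau_N$ and handle their orbit visits by a counting argument akin to the tail step, leaving the remaining bulk controlled by $F_3(N)$. With such a bound, Chebyshev yields
\[\mu\bigl\{|S_N(f_N) - N F_1(N)| > F_3(N)^{2/3} \log^{1/3+\epsilon} F_3(N)\bigr\} \ll F_3(N)^{-1/3} \log^{-(2/3+2\epsilon)} F_3(N),\]
summable along a subsequence $\{N_k\}$ chosen so that $F_3(N_k)$ grows geometrically. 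Borel--Cantelli closes the estimate at $N = N_k$, and the standing comparisons $(N+1)F_1(N+1) - N F_1(N) \ll F_3(N+1) - F_3(N) \ll F_3(N)^{2/3}$ telescope to control the variation of the main term between consecutive $N_k$ and the slow growth of the allowed error; a maximal inequality for the block fluctuations of $S_N(f_N)$, again built from (\ref{eqnuniformhyp1}) via partial summation, then permits the interpolation to all $N$.
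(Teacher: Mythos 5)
Your tail step is, in substance, the paper's Lemma~\ref{lemonelargevalue}: the same pair--counting second moment built from (\ref{eqnuniformhyp1}) and the weak-$\mathrm{L}^\phi$ bound $\mu\{f>\tau_\phi(N)\}\ll (N\log^{1/2+\epsilon}N)^{-1}$, Borel--Cantelli along dyadic $N$, and monotonicity of $\tau_\phi$; that part is fine. The genuine gap is the bulk step, and you have flagged it yourself: the whole quantitative content of the theorem is the concentration of $\sum_{m<N}f\mathbf{1}_{\{f\le\tau_\phi(N)\}}(T^mx)$ about $NF_1(N)$ at scale $F_3(N)=F_1(N)^2(N+G(N))+F_2(N)$, and your proposal does not establish it. Your own accounting of the second moment is the natural one --- each of the $N$ diagonal choices $m=m'$ contributes $\int(\sum_{n\le N}f_n)^2d\mu$, so the straightforward bound is $\ll NF_2(N)+F_1(N)^2(N+G(N))$ --- and the promised refinement (``isolate the rarely visited atoms near the truncation level and handle their visits by a counting argument'') is only a gesture, not an argument; without it the Chebyshev display and everything after it is unsupported. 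For comparison, the paper does not refine anything: it proves Theorem~\ref{thmseqoffns} by the direct computation (\ref{eqn2ndmom1})--(\ref{eqn2ndmom4}), in which the $m=m'$ contribution is recorded in (\ref{eqn2ndmom4}) as a single $\int f_nf_{n'}\,d\mu$, yielding the variance bound $\ll F_3(N)$ at once, and then deduces Theorem~\ref{thmweakint} by applying Theorem~\ref{thmseqoffns} to the layers $f_n=f\mathbf{1}_{\{\tau_\phi(n-1)<f\le\tau_\phi(n)\}}$ together with Lemma~\ref{lemonelargevalue}. So your worry targets exactly the one step the paper disposes of in a line; to have a proof you must either justify that the diagonal really enters only once (reconciling it with your count of $NF_2(N)$) or genuinely carry out the refinement you sketch. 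As written, neither is done, so the proposal is incomplete at its central point.

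A second, fixable, defect is your interpolation scheme. If you choose $N_k$ so that $F_3(N_k)$ grows geometrically, then hypothesis (\ref{eqnthm2.1}) only controls the change of the main term by $N_{k+1}F_1(N_{k+1})-N_kF_1(N_k)\ll F_3(N_{k+1})-F_3(N_k)\asymp F_3(N_k)$, which dwarfs the target error $F_3(N_k)^{2/3}\log^{1/3+\epsilon}F_3(N_k)$; no maximal inequality for the intermediate fluctuations can repair that, since the deterministic main term itself moves too much between consecutive test points. The paper instead takes $N_j$ minimal with $F_3(N_j)>j^3\log^{1+\epsilon}(j+1)$, so that Chebyshev gives a summable series $\sum_j 1/(j\log^{1+\epsilon}(j+1))$ while consecutive increments of the main term are $\ll j^2\log^{1+\epsilon}(j+1)\ll F_3(N)^{2/3}\log^{1/3+\epsilon}F_3(N)$, and the interpolation uses nothing beyond the monotonicity in $N$ of the (non-negative, truncated) Birkhoff sums --- no maximal inequality is needed or invoked.
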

In practice the hypotheses on $F_1$ and $G$ are easy to check. The main differences between our results and those in \cite{AaronsonNakada2003, NakadaNatsui2002, NakadaNatsui2003} are that ours are quantitative and, although our hypotheses on $g(N)$ are in general stronger than those in \cite[Theorem 1.1]{AaronsonNakada2003}, we only require the removal of at most one term from each Birkhoff sum.

Theorem \ref{thmweakint} will be derived from the following quantitative result for sums of Birkhoff sums.
\begin{theorem}\label{thmseqoffns}
Suppose that $(X,\mu),~T,$ and $\A$ are as above and that (\ref{eqnuniformhyp1}) is satisfied. Let $\{f_n\}_{n\in\N}$ be a sequence of non-negative functions in $\mathrm{L}^2$, measurable with respect to $\A$, and for each $N\in\N$ let
\begin{equation*}
F_1(N)=\int_X\sum_{n=1}^Nf_n~d\mu,\quad F_2(N)=\int_X\left(\sum_{n=1}^Nf_n\right)^2~d\mu,
\end{equation*}
\[G(N)=\sum_{n=1}^Ng(n),\quad\text{ and }\quad F_3(N)=F_1(N)^2(N+G(N))+F_2(N).\]
If
\begin{align}
&(N+1)F_1(N+1)-NF_1(N)~\ll~ F_3(N+1)-F_3(N)~\ll~ F_3(N)^{2/3}\label{eqnthm2.1}
\end{align}
then for any $\epsilon>0$ and for a.e. $x\in X$ we have that
\begin{align*}
\sum_{m=0}^{N-1}\sum_{n=1}^Nf_n(T^mx)=&NF_1(N)+O_\epsilon\left(F_3(N)^{2/3}\log^{1/3+\epsilon} F_3(N)\right).
\end{align*}
\end{theorem}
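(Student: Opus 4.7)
The plan is a three-step argument: an $L^2$ variance estimate for $S_N(x):=\sum_{m=0}^{N-1}\sum_{n=1}^N f_n(T^mx)$ coming from (\ref{eqnuniformhyp1}), Borel-Cantelli along a suitable subsequence $\{N_k\}$, and interpolation to all $N$ using monotonicity together with the hypotheses on $F_1$ and $F_3$.

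For the variance, set $h_N=\sum_{n=1}^N f_n$, so that $S_N(x)=\sum_{m=0}^{N-1}h_N(T^mx)$ and $\int S_N\,d\mu=NF_1(N)$. Since each $f_n$ is non-negative and $\A$-measurable, $h_N=\sum_i c_{i,N}\mathbf 1_{A_i}$ with $c_{i,N}\ge 0$, and the correlations
\[
c_k:=\int h_N\,(h_N\circ T^k)\,d\mu-F_1(N)^2=\sum_{i,j}c_{i,N}c_{j,N}\,\mu(A_i)\mu(A_j)\!\left(\frac{\mu(A_i\cap T^{-k}A_j)}{\mu(A_i)\mu(A_j)}-1\right)
\]
carry non-negative weights $c_{i,N}c_{j,N}$. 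Hence the one-sided hypothesis (\ref{eqnuniformhyp1}) transfers term by term to give $\sum_{k=0}^{j}c_k\le g(j)F_1(N)^2$ for every $j$. Combining this with $\mathrm{Var}(S_N)=Nc_0+2\sum_{k=1}^{N-1}(N-k)c_k$, rewritten by summation by parts as $Nc_0+2\sum_{k=1}^{N-1}\sum_{j=1}^{k}c_j$, should yield $\mathrm{Var}(S_N)\ll F_1(N)^2 G(N)\ll F_3(N)$.

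For the subsequence step I would pick $\{N_k\}$ with $F_3(N_k)\asymp k^3$, which is achievable because $F_3(N+1)-F_3(N)\ll F_3(N)^{2/3}$ forces $F_3$ to grow at most cubically. Chebyshev at threshold $r_k:=F_3(N_k)^{2/3}\log^{1/3+\epsilon}F_3(N_k)$, combined with Borel-Cantelli, would deliver $|S_{N_k}-N_kF_1(N_k)|\ll r_k$ almost everywhere. To pass from the subsequence to arbitrary $N\in[N_k,N_{k+1}]$, I would exploit monotonicity of $N\mapsto S_N$ (which holds because each $f_n\ge 0$) to sandwich $S_{N_k}\le S_N\le S_{N_{k+1}}$, and invoke the first inequality in (\ref{eqnthm2.1}) to conclude $|NF_1(N)-N_kF_1(N_k)|\ll F_3(N_{k+1})-F_3(N_k)\ll F_3(N_k)^{2/3}$. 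Combining these recovers the stated estimate uniformly in $N$.

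The main obstacle is the Borel-Cantelli step for small $\epsilon$: the Chebyshev summand $F_3(N_k)/r_k^2\asymp k^{-1}\log^{-(2/3+2\epsilon)}k$ is summable only when $\epsilon>1/6$, so the second-moment bound sits exactly at the edge of what is needed. I would resolve this by a higher-moment estimate, for example $\int(S_N-NF_1(N))^4\,d\mu\ll F_3(N)^2$, derived by applying (\ref{eqnuniformhyp1}) repeatedly to four-fold products $h_N\cdot h_N\circ T^{k_1}\cdot h_N\circ T^{k_2}\cdot h_N\circ T^{k_3}$; feeding this into Borel-Cantelli at the same threshold $r_k$ makes the sum summable for every $\epsilon>0$. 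The $\log^{1/3+\epsilon}$ factor in the conclusion is the trace of this borderline summability, and the three-part structure $F_3(N)=F_1(N)^2(N+G(N))+F_2(N)$ is exactly what the matching moment calculation naturally produces from the hypothesis.
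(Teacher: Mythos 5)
Your outline (an $\mathrm{L}^2$ variance bound from (\ref{eqnuniformhyp1}), Chebyshev plus Borel--Cantelli along a subsequence, then monotone interpolation using both inequalities of (\ref{eqnthm2.1})) is exactly the paper's strategy, and your variance step is essentially sound: keeping the $-c_0$ in $\sum_{k=1}^{j}c_k\le g(j)F_1(N)^2-c_0$ cancels the $Nc_0$ term and yields $\mathrm{Var}(S_N)\ll F_1(N)^2G(N)+F_2(N)\ll F_3(N)$, which matches the paper's bound (note that if you drop the $-c_0$ you are left with $NF_2(N)$, which is far too large, so that cancellation is not optional). The genuine gap is the step you yourself flag. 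With $F_3(N_k)\asymp k^3$ the Chebyshev series is summable only for $\epsilon>1/6$, and your proposed repair via a fourth-moment bound $\int(S_N-NF_1(N))^4\,d\mu\ll F_3(N)^2$ cannot work: hypothesis (\ref{eqnuniformhyp1}) controls only pair correlations $\mu(A_i\cap T^{-n}A_j)$, whereas a fourth moment of $S_N$ requires quadruple correlations $\mu(A_i\cap T^{-n_1}A_j\cap T^{-n_2}A_k\cap T^{-n_3}A_l)$, about which the hypothesis says nothing. Moreover the $f_n$ are only assumed to lie in $\mathrm{L}^2$, so the fourth moment need not even be finite, and even when it is, the diagonal contribution of order $N\int\bigl(\sum_{n\le N}f_n\bigr)^4d\mu$ already violates the claimed bound in the paper's flagship application (truncation at $\tau\asymp N\log^{1/2+\epsilon}N$ gives roughly $N^4$ up to logarithms, while $F_3(N)^2\asymp N^2\log^4N$).

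The correct fix needs no higher moments: build the logarithm into the subsequence. The paper takes $N_j$ minimal with $F_3(N_j)>j^3\log^{1+\epsilon}(j+1)$ and applies Chebyshev at threshold $j^2\log^{1+\epsilon}(j+1)$, so the Borel--Cantelli series becomes $\sum_j 1/(j\log^{1+\epsilon}(j+1))<\infty$ for every $\epsilon>0$. The second inequality in (\ref{eqnthm2.1}) controls the overshoot, giving $F_3(N_j)=j^3\log^{1+\epsilon}(j+1)+O\bigl(j^2\log^{(2/3)(1+\epsilon)}(j+1)\bigr)$, hence $N_{j+1}F_1(N_{j+1})-N_jF_1(N_j)\ll F_3(N_{j+1})-F_3(N_j)\ll j^2\log^{1+\epsilon}(j+1)$; your sandwich argument then goes through verbatim, and for $N\ge N_{j-1}$ one has $j^2\log^{1+\epsilon}(j+1)\ll F_3(N)^{2/3}\log^{1/3+\epsilon}F_3(N)$, which is precisely the stated error term. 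So the $\log^{1/3+\epsilon}$ factor is the trace of the log-weighted spacing, not of any fourth-moment estimate.
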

From Theorem \ref{thmseqoffns} we also obtain a quantitative version of the classical ergodic theorem.
\begin{corollary}\label{corintfnergthm}
If (\ref{eqnuniformhyp1}) is satisfied, if $f\in\mathrm{L}^2$ is measurable with respect to $\A$, and if
\begin{equation}\label{eqncor3.1}
g(N+1)\ll (N+G(N))^{2/3},
\end{equation}
then for almost every $x\in X$ we have that
\begin{equation*}
\frac{1}{N}\sum_{n=0}^{N-1}f(T^nx)=\int_Xf~d\mu+O_\epsilon\left(N^{-1}(N+G(N))^{2/3}\log^{1/3+\epsilon}(N+G(N))\right).
\end{equation*}
\end{corollary}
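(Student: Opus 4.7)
The plan is to derive Corollary \ref{corintfnergthm} as an almost immediate consequence of Theorem \ref{thmseqoffns} by choosing a degenerate sequence $\{f_n\}$. Specifically, after splitting $f = f^+ - f^-$ into non-negative $\A$-measurable $\mathrm{L}^2$ pieces and handling each separately, I would set $f_1 = f^{\pm}$ and $f_n \equiv 0$ for $n \geq 2$. With this choice the double sum $\sum_{m=0}^{N-1}\sum_{n=1}^N f_n(T^m x)$ in Theorem \ref{thmseqoffns} collapses to the single Birkhoff sum $\sum_{m=0}^{N-1} f^{\pm}(T^m x)$ appearing in the corollary.

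For each non-negative piece the associated quantities are $F_1(N) = \int f^{\pm}\, d\mu$ and $F_2(N) = \int (f^{\pm})^2\, d\mu$, both independent of $N$ (and $F_2$ is finite by the $\mathrm{L}^2$ hypothesis). Consequently
\[F_3(N) = F_1^2(N + G(N)) + F_2 \asymp_f N + G(N).\]
The two hypotheses in (\ref{eqnthm2.1}) still need to be checked. The left inequality, $(N+1)F_1(N+1) - NF_1(N) = F_1 \ll F_3(N+1) - F_3(N) = F_1^2(1 + g(N+1))$, is trivial whenever $\int f^{\pm} > 0$ (and otherwise $f^{\pm} = 0$ a.e.\ and the conclusion is immediate). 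The right inequality, $F_1^2(1 + g(N+1)) \ll F_3(N)^{2/3} \asymp_f (N + G(N))^{2/3}$, is precisely what condition (\ref{eqncor3.1}) guarantees.

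Theorem \ref{thmseqoffns} then yields, for almost every $x$,
\[\sum_{m=0}^{N-1} f^{\pm}(T^m x) = N\int_X f^{\pm}\, d\mu + O_\epsilon\bigl(F_3(N)^{2/3}\log^{1/3+\epsilon} F_3(N)\bigr).\]
Using $F_3(N) \ll_f N + G(N)$ to absorb the $f$-dependent constants in the error, subtracting the $f^-$ identity from the $f^+$ identity, and dividing by $N$ produces the asserted estimate. I anticipate no real obstacle: the one substantive observation is that the growth hypothesis (\ref{eqncor3.1}) on $g$ is tailored precisely so that the second comparison in (\ref{eqnthm2.1}) holds for this degenerate sequence, and everything else is routine bookkeeping on constant sequences $F_1, F_2$.
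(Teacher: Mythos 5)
Your proposal is correct and takes essentially the same route as the paper, which likewise derives the corollary from Theorem \ref{thmseqoffns} by taking $f_1=f$ and $f_n=0$ for $n\ge 2$ and observing that, since $F_1(N)$ is then constant, condition (\ref{eqnthm2.1}) holds if and only if (\ref{eqncor3.1}) does. Your additional splitting $f=f^+-f^-$ to meet the non-negativity hypothesis (left implicit in the paper) and your treatment of the degenerate case $\int_X f^{\pm}\,d\mu=0$ are just careful refinements of the same argument.
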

We conclude with a discussion of how our hypotheses on $T$ compare to the usual notions of ergodicity and mixing. On one hand it is easy to see that if $\A$ is finite then (\ref{eqnuniformhyp1}) will be satisfied whenever $T$ is ergodic. On the other hand when $\A$ is infinite then even strong mixing is not enough to ensure that (\ref{eqnuniformhyp1}) is satisfied. Furthermore, as the following theorem demonstrates, the hypothesis of strong mixing by itself is not strong enough to draw the conclusion in Theorem \ref{thmweakint}.
\begin{theorem}\label{thmmixingexample}
For any choice of $\{F_N\}\subseteq\R^+$ and $\delta:\R/\Z\times\N\rar\{0,1\}$, the quantities
\begin{equation*}
\frac{1}{F_N}\left(\sum_{n=0}^{N-1}\left\lfloor\frac{1}{\{2^nx\}}\right\rfloor-\delta(x,N)\cdot\max_{0\le n<N}\left\lfloor\frac{1}{\{2^nx\}}\right\rfloor\right)
\end{equation*}
either diverge for a.e. $x\in\R/\Z$, or they converge to $0$ a.e. (with respect to Lebesgue measure), as $N\rar\infty$.
\end{theorem}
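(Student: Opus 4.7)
The plan is to translate the problem into the combinatorics of the binary expansion of $x$, establish a structural inequality reflecting the correlation of $f(T^n x)$ within a single zero-run, and then apply Aaronson's trichotomy for Birkhoff sums of non-integrable functions. Writing $x=\sum_{n\ge 1}b_n(x)2^{-n}$ with the bits $b_n$ i.i.d.\ Bernoulli$(1/2)$ under Lebesgue measure, note that $\{2^n x\}$ begins with the bits $b_{n+1},b_{n+2},\dots$, so if $Z_n(x)$ denotes the length of the leading run of zeros in that expansion, then
\[
2^{Z_n(x)}\le \lfloor 1/\{2^n x\}\rfloor < 2^{Z_n(x)+1}.
\]
A maximal zero-run of length $L$ in bits $b_{p+1},\dots,b_{p+L}$ produces time indices $n=p,p+1,\dots,p+L-1$ at which $Z_n=L,L-1,\dots,1$ respectively, contributing about $2\cdot 2^L$ to $S_N:=\sum_{n=0}^{N-1}f(T^n x)$ while the largest single term in that block is less than $2^{L+1}$. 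Writing $M_1(x,N)=\max_{0\le n<N}f(T^n x)$ and noting that the global maximum is attained inside the longest such run, the remaining $L-1$ terms of that same run contribute at least a positive fraction of $M_1$; carrying out the elementary bound $2+4+\dots+2^{L-1}=2^L-2$ on the non-maximal terms yields the key inequality
\[
S_N(x)-M_1(x,N)\ge \tfrac{1}{3}\,M_1(x,N) \qquad \text{for all $N$ large enough, a.e.\ $x$.}
\]

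Suppose now, for contradiction, that the quantity in the statement converges a.e.\ to some $c>0$; write $\tilde S_N:=S_N-\delta(x,N)M_1(x,N)$, so $F_N\to\infty$ and $(c-\tfrac12)F_N\le \tilde S_N\le (c+\tfrac12)F_N$ for all large $N$, a.e. Since $\tilde S_N\le S_N$, this gives $\liminf_{N\to\infty}S_N/F_N\ge c-\tfrac12>0$ a.e. Combining the key inequality with $\tilde S_N\ge S_N-M_1$ gives $M_1\le 3(S_N-M_1)\le 3\tilde S_N\le 3(c+\tfrac12)F_N$, and so
\[
S_N=\tilde S_N+\delta(x,N)M_1\le (c+\tfrac12)F_N+M_1\le 4(c+\tfrac12)F_N,
\]
which yields $\limsup_{N\to\infty}S_N/F_N\le 4(c+\tfrac12)<\infty$ a.e.

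To conclude, observe that $f(x)=\lfloor 1/x\rfloor$ is not Lebesgue-integrable (its integral is $\sum_{k\ge 1}1/(k+1)=\infty$) and that the doubling map is ergodic. The full version of Aaronson's theorem (proved in \cite{Aaronson1977} and a strengthening of the version quoted in the introduction of the present paper) states that in this situation, for every positive sequence $F_N$, either $\liminf_N S_N/F_N=0$ a.e.\ or $\limsup_N S_N/F_N=\infty$ a.e. This directly contradicts the two-sided bound established in the previous paragraph, completing the proof. The main obstacle is arranging the proof so that the combinatorial inequality of the first paragraph applies uniformly regardless of how $\delta(x,N)$ is chosen; this inequality is precisely what distinguishes the doubling map from the systems covered by Theorem~\ref{thmweakint}, since it reflects the feature that each large value of $f$ along a doubling-map orbit is flanked by a geometrically decreasing sequence of large values from the same zero-run, so that removing only the single largest term cannot absorb more than a bounded fraction of the run's contribution.
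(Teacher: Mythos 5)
Your overall strategy --- reduce to Aaronson's dichotomy for the untrimmed sums and exploit the fact that under the doubling map each large value $\lfloor 1/\{2^nx\}\rfloor$ is followed by a value about half as large, so that removing one term cannot change the sum by more than a bounded factor --- is the same as the paper's. But your ``key inequality'' $S_N(x)-M_1(x,N)\ge \tfrac13 M_1(x,N)$ for all sufficiently large $N$ and a.e.\ $x$ is unjustified, and it is exactly where the proof breaks. Your derivation counts the $L-1$ terms of the zero-run that \emph{follow} the maximal term, but those occur at times $n^*+1,\dots,n^*+L-1$ and are included in $S_N$ only if these indices are $<N$. When the maximum is attained at the last index $n^*=N-1$ (a fresh record arriving at the edge of the window), none of them is included and the inequality has no support: all that is left is $S_{N-1}$, and since $f$ has tail of order $1/\lambda$ (infinite mean), a fresh record dwarfing the entire previous sum should occur infinitely often for a.e.\ $x$ (a run of length about $\log_2 N+\log_2\log N+O(1)$ starting in the dyadic block $[N,2N)$ has probability comparable to $1/\log N$, and distinct blocks use independent bits). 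So the inequality can at best be asserted for those $N$ whose maximizing index is at most $N-2$. With only that, your deduction $M_1\le 3\tilde S_N\le 3(c+\tfrac12)F_N$ for all large $N$, hence $\limsup_N S_N/F_N<\infty$, collapses: the limsup in Aaronson's alternative may be attained precisely along the excluded times, where $\delta(x,N)=1$ and the newly arrived maximum is trimmed away --- which is exactly the choice of $\delta$ an adversary would make, and the case the theorem is really about.

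The paper's proof isolates and treats this boundary case: it shows that the blow-up of $\max_{0\le n<N_j}f(T^nx)/F_{N_j}$ along the times $N_j$ with $\delta(x,N_j)=1$ must happen infinitely often with the maximizing index $n_j<N_j-1$ (``otherwise $c=0$''), and for such $j$ the successor term $f(T^{n_j+1}x)\approx\tfrac12\max_{0\le n<N_j}f(T^nx)$ lies in the sum, is not the maximum, hence cannot be trimmed, forcing $\tilde S_{N_j}\ge\tfrac12\max_{0\le n<N_j}f(T^nx)$ and the contradiction. Your argument needs a step of this kind (some mechanism ruling out that $\delta=1$ is placed exactly when a record sits in the final slot); as written, it assumes the difficulty away, and your closing remark that the inequality must ``apply uniformly regardless of how $\delta(x,N)$ is chosen'' identifies the crux without resolving it. Two smaller points: the sandwich $(c-\tfrac12)F_N\le\tilde S_N\le(c+\tfrac12)F_N$ is vacuous when $c\le\tfrac12$ (use $c\pm\epsilon$ with $\epsilon<c$), and you should justify passing from ``converges a.e.'' to ``converges a.e.\ to a constant $c>0$'' --- the paper does this via $T$-invariance of the limit function and ergodicity, or one can work on a positive-measure set where the limit is bounded away from $0$, which is compatible with the a.e.\ statements in Aaronson's theorem.
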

This example shows the importance of the uniformity in hypothesis (\ref{eqnuniformhyp1}).

\emph{Acknowledgments:} The author would like to thank Jeffrey Vaaler, Tom Ward, and Manfred Einsiedler for helpful conversations concerning the results in this paper.

\section{Proofs of Theorem \ref{thmseqoffns} and Corollary \ref{corintfnergthm}}
For each $N\in\N$ let
\[\Phi_N(x)=\sum_{m=0}^{N-1}\sum_{n=1}^Nf_n(T^mx).\]
Since $T$ is a measure preserving transformation we have that
\begin{equation*}
\int_X\Phi_N~d\mu=NF(N).
\end{equation*}
For the second moments we have
\begin{align}\label{eqnproofthm2.1}
\int_X\Phi_N^2~d\mu &=\sum_{n,n'=1}^N\sum_{m,m'=0}^{N-1}\int_Xf_n(T^mx)f_{n'}(T^{m'}x)~d\mu.
\end{align}
Now for each $i,n\in\N$ let us denote the value of $f_n$ on $A_i$ by $\alpha_{n,i},$ so that
\begin{align}
&\sum_{m=0}^{N-1}\sum_{m'=m}^{N-1}\int_Xf_n(T^mx)f_{n'}(T^{m'}x)~d\mu\nonumber\\ &\qquad=\sum_{m=0}^{N-1}\sum_{m'=0}^{N-1-m}\int_Xf_n(x)f_{n'}(T^{m'}x)~d\mu\nonumber\\
&\qquad=\sum_{\substack{i,j=1\\ i\not=j}}^\infty\alpha_{n,i}\alpha_{n',j}\sum_{m=0}^{N-1}\sum_{m'=0}^{N-1-m}\mu(A_i\cap T^{-m'}A_j)\label{eqn2ndmom1}\\
&\qquad\quad +\sum_{i=1}^\infty\alpha_{n,i}\alpha_{n',i}\sum_{m=0}^{N-1}\sum_{m'=0}^{N-1-m}\mu(A_i\cap T^{-m'}A_i).\label{eqn2ndmom2}
\end{align}
Using (\ref{eqnuniformhyp1}) in (\ref{eqn2ndmom1}) gives
\begin{align}
&\sum_{\substack{i,j=1\\ i\not=j}}^\infty\alpha_{n,i}\alpha_{n',j}\sum_{m=0}^{N-1}\sum_{m'=0}^{N-1-m}\mu(A_i\cap T^{-m'}A_j)\nonumber\\
&\qquad=\sum_{\substack{i,j=1\\i\not= j}}^\infty\alpha_{n,i}\alpha_{n',j}\mu(A_i)\mu(A_j)\sum_{m=0}^{N-2}\left(N-m+O(g(N-m))\right).\label{eqn2ndmom3}
\end{align}
It is worth pointing out that here we have also used the fact that the summand corresponding to $m=N-1$ is $0$, since $i\not=j$. For (\ref{eqn2ndmom2}) we have that
\begin{align}
&\sum_{i=1}^\infty\alpha_{n,i}\alpha_{n',i}\sum_{m=0}^{N-1}\sum_{m'=0}^{N-1-m}\mu(A_i\cap T^{-m'}A_i)\nonumber\\
&\qquad=\sum_{i=1}^\infty\alpha_{n,i}\alpha_{n',i}\mu(A_i)^2\sum_{m=0}^{N-2}\left(N-m+O(g(N-m))\right)+\int_X f_nf_{n'}d\mu .\label{eqn2ndmom4}
\end{align}
Combining (\ref{eqn2ndmom3}) and (\ref{eqn2ndmom4}) gives
\begin{align*}
&\sum_{m=0}^{N-1}\sum_{m'=m}^{N-1}\int_Xf_n(T^mx)f_{n'}(T^{m'}x)~d\mu\\
&\qquad =\int_Xf_n~d\mu\cdot\int_Xf_{n'}~d\mu\cdot\left(\frac{N^2+N}{2}+O(G(N))\right)+\int_X f_nf_{n'}d\mu,
\end{align*}
and adding this to the complementary sum in (\ref{eqnproofthm2.1})  (i.e. with $m'\le m$) we have that
\begin{align*}
&\int_X\left(\Phi_N-NF_1(N)\right)^2~d\mu\\
&\qquad\le \sum_{n,n'=1}^N\left(\sum_{\substack{m,m'=0\\m\le m'}}^{N-1}+\sum_{\substack{m,m'=0\\m'\le m}}^{N-1}\right)\int_Xf_n(T^mx)f_{n'}(T^{m'}x)~d\mu-N^2F_1(N)^2\\
&\qquad\ll F_1(N)^2(N+G(N))+F_2(N)=F_3(N).
\end{align*}
Now for each $j\in\N$ let $N_j$ be the smallest integer with
\[F_3(N_j)> j^3\log^{1+\epsilon}(j+1).\]
By Chebyshev's Inequality and hypothesis (\ref{eqnthm2.1}) we have that
\begin{align*}
\sum_{j=1}^\infty\mu\left\{x\in X:|\Phi_{N_j}(x)-N_jF_1(N_j)|>j^2\log^{1+\epsilon}(j+1)\right\}\ll \sum_{j=1}^\infty\frac{1}{j\log^{1+\epsilon}(j+1)}<\infty,
\end{align*}
and the Borel-Cantelli Lemma allows us to conclude that for a.e. $x\in X$
\begin{equation}\label{eqnproofthm2.2}
\Phi_{N_j}(x)=N_jF_1(N_j)+O_\epsilon\left(j^2\log^{1+\epsilon}(j+1)\right)\quad\text{as}\quad j\rar\infty.
\end{equation}
Since the functions $f_n$ are non-negative we have that $\Phi_{N_j}\le\Phi_N\le\Phi_{N_{j+1}}$ whenever $N_j\le N\le N_{j+1}$. Using (\ref{eqnthm2.1}) we see that
\begin{align*}
&N_{j+1}F_1(N_{j+1})-N_jF_1(N_j)\\
&\qquad\ll F_3(N_{j+1})-F_3(N_j)\\
&\qquad=(j+1)^3\log^{1+\epsilon}(j+2)-j^3\log^{1+\epsilon}(j+1)+O(j^2\log^{2/3(1+\epsilon)}(j+1))\\
&\qquad\ll j^2\log^{1+\epsilon}(j+1).
\end{align*}
Thus we can interpolate between $N_j$ and $N_{j+1}$ in (\ref{eqnproofthm2.2}) to obtain
\begin{equation*}
\Phi_{N}(x)=NF_1(N)+O_\epsilon\left(j^2\log^{1+\epsilon}(j+1)\right),~N_{j-1}\le N<N_j,
\end{equation*}
for almost every $x\in X$. Finally for $N\ge N_{j-1}$ we have that
\begin{equation*}
j^2\log^{1+\epsilon}(j+1)\ll F_3(N)^{2/3}\log^{1/3+\epsilon}F_3(N),
\end{equation*}
which finishes the proof.

Corollary \ref{corintfnergthm} follows directly from Theorem \ref{thmseqoffns} by setting $f_1=f$ and $f_n=0$ for $n\ge 2.$ In this case $F_1(N)$ is constant and thus (\ref{eqnthm2.1}) is satisfied if and only if (\ref{eqncor3.1}) is.

\section{Proofs of Theorems \ref{thmweakint(cor)} and \ref{thmweakint} and Corollary \ref{corcontfrac}}
The proof of Theorem \ref{thmweakint} requires one more important piece of information.
\begin{lemma}\label{lemonelargevalue}
Assume that the hypotheses of Theorem \ref{thmweakint} are satisfied and that $f\notin \mathrm{L}^1$. Then for a.e. $x\in X$ there are at most finitely many $N\in\N$ for which
\begin{equation*}
\#\{0\le n<N~:~f(T^nx)>\tau_\phi(N)\}>1.
\end{equation*}
\end{lemma}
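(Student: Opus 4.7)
The plan is to apply the Borel-Cantelli lemma along a geometrically sparse sequence and then interpolate. For each $N\in\N$ set $B_N=\{x\in X:f(x)>\tau_\phi(N)\}$. Since $f$ is $\A$-measurable, $B_N$ is a disjoint union of atoms $\bigcup_{i\in I_N}A_i$, and the definition of $\mathrm{L}^{\phi,w}$ gives $\mu(B_N)\le K_\phi(f)/(N\log^{1/2+\epsilon}N)$.

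I would bound the measure of the bad event
\[
E_N=\{x\in X:\#\{0\le n<N:f(T^nx)>\tau_\phi(N)\}\ge 2\}
\]
by a pairs count, using $T$-invariance of $\mu$:
\[
\mu(E_N)\le\sum_{0\le m<m'<N}\mu(T^{-m}B_N\cap T^{-m'}B_N)=\sum_{k=1}^{N-1}(N-k)\,\mu(B_N\cap T^{-k}B_N).
\]
Summing hypothesis (\ref{eqnuniformhyp1}) over $i,j\in I_N$ gives the two-fold hitting estimate
\[
\sum_{k=0}^{N}\mu(B_N\cap T^{-k}B_N)=\sum_{i,j\in I_N}\sum_{k=0}^{N}\mu(A_i\cap T^{-k}A_j)\le\bigl(N+1+g(N)\bigr)\mu(B_N)^2,
\]
so combining the two displays together with $g(N)=o(N)$ yields $\mu(E_N)\ll 1/\log^{1+2\epsilon}N$.

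This bound is not summable in $N$, but it is summable along $N_j=2^j$. To handle intermediate values I would exploit the fact that $\tau_\phi$ is non-decreasing (as $\phi$ is increasing): if $N_j\le N<N_{j+1}$ and $E_N$ occurs, then so does the inflated event
\[
\tilde E_j=\{x\in X:\#\{0\le n<N_{j+1}:f(T^nx)>\tau_\phi(N_j)\}\ge 2\}.
\]
Repeating the previous estimate with $N_{j+1}$ iterates and threshold $\tau_\phi(N_j)$ gives $\mu(\tilde E_j)\ll N_{j+1}(N_{j+1}+g(N_{j+1}))\mu(B_{N_j})^2\ll 1/\log^{1+2\epsilon}N_j\ll 1/j^{1+2\epsilon}$, so $\sum_j\mu(\tilde E_j)<\infty$ and the Borel-Cantelli lemma implies that for a.e.\ $x$ only finitely many $\tilde E_j$, and hence only finitely many $E_N$, occur.

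The main technical step is the two-fold hitting estimate derived from (\ref{eqnuniformhyp1}) for the union-of-atoms set $B_N$; once this is in place, the rest is Chebyshev together with Borel-Cantelli along a geometric subsequence, with the monotonicity of $\tau_\phi$ making the interpolation routine. The hypothesis $f\notin\mathrm{L}^1$ does not appear to be used; it is presumably included for context, since the conclusion is only of interest in that regime.
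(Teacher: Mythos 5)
Your proposal is correct and is essentially the paper's own argument: a pair-count bound obtained by summing hypothesis (\ref{eqnuniformhyp1}) over the atoms of the super-level set, the weak-$\mathrm{L}^{\phi}$ bound $\mu(B_N)\ll (N\log^{1/2+\epsilon}N)^{-1}$, Borel--Cantelli along the dyadic sequence, and interpolation via the monotonicity of $\tau_\phi$ with a doubled window. Your closing remark is also consistent with the paper, whose proof likewise never uses $f\notin\mathrm{L}^1$.
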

\begin{proof}
For each $N\in\N$ let
\begin{equation*}
B_N=\sum_{\substack{m,n=0\\m\not= n}}^{2N-1}\mu\left\{x\in X:f(T^mx)>\tau_\phi(N),f(T^nx)>\tau_\phi(N)\right\}.
\end{equation*}
Also if $N\in\N$ then let $\{i_j\}_{j\in\N}$ be the sequence of distinct integers for which
\begin{equation*}
\{x\in X:f(x)>\tau_\phi(N)\}=\bigcup_{j=1}^\infty A_{i_j}.
\end{equation*}
By our hypotheses on $T$ we have that
\begin{align*}
B_N&=2\sum_{m=0}^{2N-2}\sum_{n=m+1}^{2N-1}\mu\left\{f(x)>\tau_\phi(N),f(T^{n-m}x)>\tau_\phi(N)\right\}\\
&=2\sum_{j,k=1}^\infty\sum_{m=0}^{2N-2}\sum_{n=1}^{2N-1-m}\mu\{A_{i_j}\cap T^{-n}A_{i_k}\}\\
&=2\sum_{j,k=1}^\infty\mu(A_{i_j})\mu(A_{i_k})\sum_{m=0}^{2N-2}(2N-m-1+O(g(2N-m)))\\
&\ll\mu\{f(x)>\tau_\phi(N)\}^2N^2.
\end{align*}
Since $f\in \mathrm{L}^{\phi,w}$ we deduce that
\[B_N\ll (\log N)^{-(1+\epsilon)}.\]
Now since
\begin{equation*}
\sum_{k=1}^\infty B_{2^k}<\infty,
\end{equation*}
the Borel-Cantelli Lemma guarantees that for almost every $x\in X$ there are only finitely many $k\in\N$ for which $f(T^mx)>\tau_\phi(2^k)$ and $f(T^nx)>\tau_\phi(2^k)$ for two distinct integers $0\le m<n<2^{k+1}$. Now the proof is finished with the observation that if $2^k\le N <2^{k+1}$ then $\tau_\phi(N)\ge \tau_\phi(2^k)$.
\end{proof}
Note that the proof of Lemma \ref{lemonelargevalue} used both (\ref{eqnuniformhyp1}) and the hypothesis that $f\in \mathrm{L}^{\phi,w}$. It appears that neither of these conditions can be weakened. For the proof of Theorem \ref{thmweakint} we now apply Theorem \ref{thmseqoffns} with
\[f_n(x)=\begin{cases}f(x)&\text{if}\quad\tau_\phi(n-1)< f(x)\le\tau_\phi(n) ,\\0&\text{otherwise.}\end{cases}\]
Then the functions $F_1(N)$ in the statements of both theorems are the same. If $f\in\mathrm{L}^1$ then Theorem \ref{thmweakint} follows directly from Theorem \ref{thmseqoffns}. If $f\notin\mathrm{L}^1$ then Theorem \ref{thmweakint} is a consequence of Theorem \ref{thmseqoffns} together with Lemma \ref{lemonelargevalue}.

For the proof of Theorem \ref{thmweakint(cor)} first note that under the hypotheses there,
\begin{align*}
&N(\log^{1/2+\epsilon}N)\cdot\mu\{N\log^{1/2+\epsilon} N<f\le (N+1)\log^{1/2+\epsilon}(N+1)\}\\
&\qquad \le F(N+1)-F(N)\\
&\qquad \ll 1/N.
\end{align*}
It follows easily from this that $f\in\mathrm{L}^{\lambda,w}$ and that
\begin{align*}
\int_{\{f\le N\log^{1/2+\epsilon}N\}}f~d\mu &\ll \log N\\\intertext{ and}
\int_{\{f\le N\log^{1/2+\epsilon}N\}}f^2~d\mu &\ll N\log^{1/2+\epsilon} N.
\end{align*}
These observations together with the assumption that $g(N)=o(N^{1/2})$ easily imply that both inequalities in (\ref{eqnthm2.1}) are satisfied. Theorem \ref{thmweakint(cor)} then follows directly from Theorem \ref{thmweakint}.

Corollary \ref{corcontfrac} follows Theorem \ref{thmweakint(cor)} by considering the Birkhoff sums of the function $f_g:\R/\Z\rar\R,~f_g(x)=a_1(x)$ with respect to the Gauss map $T_g:\R/\Z\rar\R/\Z,~T_g(x)=[0;a_2(x),a_3(x),\ldots ]$, which is ergodic with respect to the absolutely continuous Borel measure $\mu_g$ defined by
\[\mu_g(E)=\frac{1}{\log 2}\int_E\frac{dx}{1+x}.\]
In this case it is well known that $g(N)\ll 1$, and the result follows immediately.

\section{Proof of Theorem \ref{thmmixingexample}}
Let $X=\R/\Z$ equipped with Lebesgue measure, and let $T:X\rar X$ be the map $T(x)=2x.$ This is a strong mixing measure preserving transformation of $\R/\Z$ (\cite[Theorems 1.10, 1.28]{Walters1982}). Let $f=f_g$ be the function defined at the end of the previous section and note that this function is in weak-$\mathrm{L}^1$ but not in $\mathrm{L}^1$. Therefore by Aaronson's theorem \cite[Theorem 1]{Aaronson1977}, for any $\{F_N\}\subseteq\R^+$ we have that
\begin{align}\label{eqnproofthm3.1}
\liminf_{N\rar\infty}\frac{1}{F_N}\sum_{n=0}^{N-1}f(T^nx)=0~\text{ a.e. or }~\limsup_{N\rar\infty}\frac{1}{F_N}\sum_{n=0}^{N-1}f(T^nx)=\infty~\text{ a.e. (or both).}
\end{align}
Now suppose that $\{F_N\}\subseteq\R$ and $\delta:\R/\Z\times\N\rar\{0,1\}$ are chosen so that
\begin{equation}\label{eqnproofthm3.2}
\frac{1}{F_N}\left(\sum_{n=0}^{N-1}f(T^nx)-\delta(x,N)\cdot\max_{0\le n<N}f(T^nx)\right)
\end{equation}
converges a.e. to $f^*:\R/\Z\rar\R$, as $N\rar\infty$. Then $f^*$ is measurable and $T$ invariant and it follows from \cite[Theorem 1.6]{Walters1982} that $f^*(x)=c$ a.e. for some $c\in\R$. Suppose that $c\not= 0$, and let $B\subseteq\R/\Z$ be the set of $x$ for which (\ref{eqnproofthm3.1}) holds but (\ref{eqnproofthm3.2}) converges to $c$. Then $B$ is a set of measure $1$ and since $c\not= 0$ it must be the second of the two options from (\ref{eqnproofthm3.1}) which holds for all $x\in B$.
Choose $x\in B$ and let $1\le N_1<N_2<\cdots$ be the sequence of positive integers $N$ for which $\delta(x,N)$ takes the value $1$. Then we must have that
\begin{equation*}
\limsup_{j\rar\infty}\frac{\max_{0\le n<N_j}f(T^nx)}{F_{N_j}}=\infty.
\end{equation*}
Since $F_N\gg N$ we have that
\[\max_{0\le n<N_j}f(T^nx)>2\]
for all sufficiently large $j$. Thus if we let $0\le n_j<N_j$ denote the integer for which
\[f(T^{n_j}x)=\max_{0\le n<N_j}f(T^nx)\]
we have that
\[f(T^{n_j+1}x)=(1/2)\max_{0\le n<N_j}f(T^nx).\]
Finally it is not difficult to show that $n_j<N_j-1$ for infinitely many $j$ (otherwise we would have to have that $c=0$). This means that for infinitely many $j\in\N$,
\[\frac{1}{F_{N_j}}\left(\sum_{n=0}^{N_j-1}f(T^nx)-\delta(x,N_j)\cdot\max_{0\le n<N_j}f(T^nx)\right)\ge\frac{\max_{0\le n<N_j}f(T^nx)}{2F_{N_j}}.\]
However the right hand side tends to infinity as $j\rar\infty$, and this is a contradiction. Therefore if (\ref{eqnproofthm3.2}) converges a.e. then it must converge to $0$.

\end{document}